\theoremstyle{plain}
\newtheorem{thm}[subsection]{Theorem}
\newtheorem{lem}[subsection]{Lemma}
\newcommand{\card}[1]{\ensuremath{ \# #1 }}
\newcommand{\set}[1]{\ensuremath{\left \{ #1 \right \} } }
\theoremstyle{definition}
\newtheorem{defn}[subsection]{Definition}
\def\v{\vee}
\def\w{\wedge}
\def\L{\mathcal{L}}
\def\a{\alpha}
\def\b{\beta}
\def\s{\sigma}
\def\d{\delta}
\def\g{\gamma}
\def\v{\vee}
\def\w{\wedge}
\def\L{\mathcal{L}}
\def\a{\alpha}
\def\b{\beta}
\def\s{\sigma}
\def\d{\delta}
\def\g{\gamma}
 \title{An identity of distributive lattices}
\author{Himadri Mukherjee}
\address{Department of Mathematics and Statistics, IISER-Kolkata}
\email{himadri@iiserkol.ac.in}
\date{\today}
\begin{document}

%\keywords{ Distributive lattices}
\subjclass[2010]{Primary 06D50}

\begin{abstract}
In a finite distributive lattice $\L$ we define two functions $s(\a)=\card{\{\d \in \L | \d \leq \a\}}$ and $l(\a)=\card{\{\d \in \L | \d \geq \a\}}$. In this present article we prove that the sum of these two functions over a finite distributive lattice are equal.
\end{abstract}

\maketitle

%%%%%%%%%%%%%%%%%%%%%%%%%%%%%%%%%%%%%%%%%%%%%%%%%%%%%%%%%%%%%%%%%%%%%%%%%%%%%%%%%%%%%%%%%%%%%%%%%%%%%%%%%%%%%%%%%%%%%%%%%%%%%%%%%%%%%%%%%%%%
\section{Introduction}

In this present article we look at the functions $s(\a)=\card{\{\d \in \L | \d \leq \a\}}$ which we will call ``smaller'' function and $l(\a)=\card{\{\d \in \L | \d \geq \a\}}$ which we will call ``larger'' function, these functions are closely related to the rank $r(\a)$ and co-rank $cr(\a)$ functions of the lattice and clearly not same unless the lattice in question is a chain lattice. Even though it's clear that $\sum_{\d \in \L} r(\d)=\sum_{\d \in \L} cr(\d)$ it is not trivial to demonstrate that ``lower sum'' $\sum_{\d \in \L} s(\d)$ is equal to the ``upper sum''  $\sum_{\d \in \L} l(\d)$. In this present article we provide an interesting proof of the fact.
We give an example as below: Hasse diagram of a distributive lattice $\L$ is given below, the nodes in the first graph denotes the numbers $s(\d)$ and the numbers in the nodes of the second graph denotes $l(\d)$. Even though the numbers are different in places the sum comes to 41 in both the cases.

\begin{center}
\begin{tikzpicture}
  [scale=.8,auto=left,every node/.style={circle,fill=blue!20}]
  \node (n1) at (0,0) {1};
  \node (n2) at (-1,1)  {2};
  \node (n3) at (1,1)  {2};
  \node (n4) at (0,2) {4};
  \node (n5) at (2,2)  {3};
  \node (n6) at (1,3)  {6};
  \node (n7) at (0,4) {7};
  \node (n8) at (2,4) {7};
  \node (n9) at (1,5) {9};

  \node (m1) at (10,0) {9};
  \node (m2) at (9,1)  {6};
  \node (m3) at (11,1)  {7};
  \node (m4) at (10,2) {5};
  \node (m5) at (12,2)  {5};
  \node (m6) at (11,3)  {4};
  \node (m7) at (10,4) {2};
  \node (m8) at (12,4) {2};
  \node (m9) at (11,5) {1};

  \foreach \from/\to in {n1/n2,n2/n4,n4/n6,n6/n7,n7/n9,n9/n8,n8/n6,n6/n5,n5/n3,n3/n1,n3/n4}
    \draw (\from) -- (\to);

  \foreach \from/\to in {m1/m2,m2/m4,m4/m6,m6/m7,m7/m9,m9/m8,m8/m6,m6/m5,m5/m3,m3/m1,m3/m4}
    \draw (\from) -- (\to);

\end{tikzpicture}

\end{center}

Let us state below our main theorem that the lower sum and upper sum of a finite distributive lattice are equal.
\begin{thm}\label{main} For a finite distributive lattice $\L$ we have, $\sum_{\d \in \L} s(\d)=\sum_{\d \in \L} l(\d)$
\end{thm}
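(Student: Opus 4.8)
The plan is to bypass distributivity entirely and prove the more transparent fact that \emph{each} of the two sums equals one and the same quantity, namely the number of ordered comparable pairs in $\L$. The first step is just Fubini for finite double sums: writing $\mathbbm{1}$ for the indicator,
\[
\sum_{\d \in \L} s(\d) \;=\; \sum_{\d \in \L}\sum_{\a \in \L} \mathbbm{1}_{\{\a \leq \d\}} \;=\; \card{\set{(\a,\d) \in \L \times \L \;:\; \a \leq \d}},
\]
\[
\sum_{\d \in \L} l(\d) \;=\; \sum_{\d \in \L}\sum_{\a \in \L} \mathbbm{1}_{\{\a \geq \d\}} \;=\; \card{\set{(\a,\d) \in \L \times \L \;:\; \a \geq \d}}.
\]

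The second step is the observation that the transposition $(\a,\d) \mapsto (\d,\a)$ restricts to a bijection between $\set{(\a,\d) : \a \leq \d}$ and $\set{(\a,\d) : \a \geq \d}$, because $\a \leq \d$ if and only if $\d \geq \a$. Comparing cardinalities with the two displays above yields $\sum_{\d \in \L} s(\d) = \sum_{\d \in \L} l(\d)$, which is Theorem~\ref{main}. This argument uses only that $\L$ is a finite poset — neither the lattice operations nor distributivity are needed — and as a sanity check, in the worked example both sums equal the count $41$ of pairs $(\a,\d)$ with $\a \le \d$.

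I do not expect a real obstacle; the only difficulty is one of presentation. Since the introduction advertises an ``interesting proof'' and stresses distributivity, one might instead want an argument that genuinely exploits Birkhoff's representation $\L \cong J(P)$, say by expressing $s(I)$ and $l(I)$ for an order ideal $I \subseteq P$ in terms of antichains of $P$ and pairing them off; but any such route is strictly longer than the two-line double count and delivers the same conclusion. I would therefore lead with the double-counting proof and, if desired, append a remark recasting it in the language of distributive lattices.
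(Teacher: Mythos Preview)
Your argument is correct and complete: rewriting each sum as the cardinality of the set of ordered comparable pairs and then swapping coordinates is a valid proof, and as you note it needs only that $\L$ is a finite poset. This is, however, \emph{not} the route the paper takes. The paper proceeds by induction on the number of join-irreducibles: it chooses a maximal join-irreducible $\a$, splits $\L$ into the pruned sublattice $\L_\a=\{\b:\b\ngeq\a\}$ and its complement $X_\a=\{\b:\b\geq\a\}$, and then uses a chain of lemmas (relating $l$ and $s$ on $\L$ to their restricted versions $l_\a,s_\a,h_\a$, introducing the fibres $C_\g=\{x\in\L_\a:x\vee\a=\g\}$ and their maxima $x_\g$, and finally matching $\sum_{\d\in\L_\a}l(\a\vee\d)$ with $\sum_{\d\in X_\a}s_\a(\d)$) to reduce the identity on $\L$ to the same identity on the smaller lattices $\L_\a$ and $X_\a$. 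Distributivity is genuinely used there, e.g.\ to show $C_\g$ is a sublattice via $(x\wedge y)\vee\a=(x\vee\a)\wedge(y\vee\a)$ and in the Birkhoff-based Lemma~\ref{join_lemma_2}. Your double count is strictly shorter and more general; what the paper's approach buys is the auxiliary machinery (pruning, the fibres $C_\g$, the identities $s_\a(\d)=s(x_\d)$ and $\sum_\g\card{C_\g}\,l(\g)=\sum_\g s(x_\g)$), which ties into the surrounding computation of $n(\L)$ and may be of independent use, but is unnecessary for the bare statement of Theorem~\ref{main}.
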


We take a maximal join irreducible element $\b \in J$ and look at the pruned lattice, see \ref{prune} for a definition, $\L_\b$. We then apply induction on the number of join irreducibles of the lattice $\L$. For a trivial lattice with $\card{J(\L)}=1$ the claim is obvious. Before the proof we need to recall few definitions and prove a few lemmas.

%%%%%%%%%%%%%%%%%%%%%%%%%%%%%%%%%%%%%%%%%%%%%%%%%%%%%%%%%%%%%%%%%%%%%%%%%%%%%%%%%%%%%%%%%%%%%%%%%%%%%%%%%%%%%%%%%%%%%%%%%%%%%%%%%%%%%%%%%%%%%%
\section{Definitions and Lemmas}

\begin{lem}\label{join_lemma} Let $\theta > \delta $ be a cover in a distributive lattice $\L$, then there is a unique join irreducible $\epsilon$ such that $\theta = \delta \v \epsilon$.
\end{lem}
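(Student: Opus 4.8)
The plan is to prove existence and uniqueness of $\epsilon$ separately, using only two standard facts: in a finite lattice every element is the join of the join irreducibles below it, and the distributive law. Throughout I use the convention that the bottom element is not a join irreducible, so that the first fact is available in the form I want.

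\emph{Existence.} Since $\L$ is finite, write $\theta = \epsilon_1 \vee \cdots \vee \epsilon_k$ as a join of join irreducibles $\epsilon_i \leq \theta$. If every $\epsilon_i \leq \delta$ then $\theta \leq \delta$, contradicting $\theta > \delta$; so fix an index $i$ with $\epsilon := \epsilon_i \not\leq \delta$. Then $\delta < \delta \vee \epsilon \leq \theta$, and since $\theta$ covers $\delta$ there is nothing strictly between them, forcing $\delta \vee \epsilon = \theta$.

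\emph{Uniqueness.} Suppose $\epsilon$ and $\epsilon'$ are join irreducibles with $\delta \vee \epsilon = \delta \vee \epsilon' = \theta$. Note $\epsilon \not\leq \delta$, since otherwise $\delta \vee \epsilon = \delta \neq \theta$; likewise $\epsilon' \not\leq \delta$. Meeting $\epsilon$ with the identity $\theta = \delta \vee \epsilon'$ and using distributivity gives
$$\epsilon = \epsilon \wedge \theta = \epsilon \wedge (\delta \vee \epsilon') = (\epsilon \wedge \delta) \vee (\epsilon \wedge \epsilon').$$
Because $\epsilon$ is join irreducible it equals one of the two terms on the right; it cannot equal $\epsilon \wedge \delta$, as that would give $\epsilon \leq \delta$, so $\epsilon = \epsilon \wedge \epsilon'$, i.e. $\epsilon \leq \epsilon'$. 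By the symmetric argument $\epsilon' \leq \epsilon$, hence $\epsilon = \epsilon'$.

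The existence half goes through in any finite lattice; distributivity is used only in the uniqueness half, in expanding $\epsilon \wedge (\delta \vee \epsilon')$. I expect the one genuinely pointed step to be that expansion — choosing to intersect with $\theta$ so that join irreducibility of $\epsilon$ can be applied — but it is short once set up. (Alternatively, via Birkhoff's representation the statement is transparent: a cover $\theta > \delta$ of order ideals of $J(\L)$ means $\theta \setminus \delta = \{p\}$ for a single element $p$, and the unique $\epsilon$ is the principal ideal generated by $p$; I would use the lattice-theoretic argument above to keep the proof self-contained.)
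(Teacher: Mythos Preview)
Your proof is correct. Both existence and uniqueness are handled cleanly: the existence argument via a join-irreducible decomposition of $\theta$ and the covering relation is standard, and the uniqueness argument via the distributive expansion $\epsilon = (\epsilon \wedge \delta) \vee (\epsilon \wedge \epsilon')$ together with join irreducibility is exactly the right maneuver.

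The paper takes a different, shorter route: it invokes Birkhoff's representation theorem directly, identifying $\theta$ and $\delta$ with order ideals $I_\theta \supset I_\delta$ of $J(\L)$, so that the cover relation forces $I_\theta \setminus I_\delta$ to be a single point $\epsilon$, which is then the required join irreducible. This is essentially the alternative you sketch in your final parenthetical remark. What your primary argument buys is self-containment --- no appeal to the structure theorem, only the distributive law and the definition of join irreducible --- and it makes the uniqueness explicit, whereas the paper leaves uniqueness to the reader (it follows because any join irreducible $\epsilon'$ with $\delta \vee \epsilon' = \theta$ must lie in $I_\theta \setminus I_\delta$). The Birkhoff route is faster if one is willing to quote the representation theorem; your route is more elementary and arguably more informative about where distributivity actually enters.
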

\begin{proof}
Using Birkhoff's representation theorem \cite{GG} we write the elements $\theta, \delta \in \L$ as ideals in the poset of join irreducibles $I_\theta$, $I_\delta$. Since $\theta $ covers $\delta$ we have $I_\theta \setminus I_\delta =\{ \epsilon \}$ for a join irreducible element $\epsilon$. It is clear that this epsilon serves the purpose.
\end{proof}

\begin{lem}\label{join_lemma_2} If $\theta \v \delta \geq \b$ where $\b$ is a join irreducible then either $\theta$ or $\delta $ is larger than $\b$
\end{lem}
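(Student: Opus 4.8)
The plan is to argue directly from distributivity together with the defining property of a join irreducible, rather than reintroducing the Birkhoff dictionary as in Lemma \ref{join_lemma}. Since $\b \leq \theta \vee \delta$, absorption gives $\b = \b \wedge (\theta \vee \delta)$. First I would apply the distributive law to the right-hand side, obtaining $\b = (\b \wedge \theta) \vee (\b \wedge \delta)$.

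Next, recall that $\b$ being join irreducible means exactly that whenever $\b = x \vee y$ one has $\b = x$ or $\b = y$. Applying this with $x = \b \wedge \theta$ and $y = \b \wedge \delta$ forces $\b = \b \wedge \theta$ or $\b = \b \wedge \delta$, i.e. $\b \leq \theta$ or $\b \leq \delta$, which is precisely the assertion that $\theta$ or $\delta$ is larger than $\b$. As an alternative one could re-run the argument of Lemma \ref{join_lemma}: writing $\theta,\delta,\b$ as the down-sets $I_\theta, I_\delta, \downarrow\!\b$ of join irreducibles, the hypothesis becomes $\downarrow\!\b \subseteq I_\theta \cup I_\delta$, so $\b \in I_\theta$ or $\b \in I_\delta$, and since each $I$ is a down-set the corresponding element dominates $\b$.

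I do not expect a real obstacle here — the statement is a one-line consequence of distributivity. The only point worth a word is the degenerate case $\b = \hat 0$: under the usual convention that join irreducibles have a unique lower cover this does not occur, and if one instead allows $\hat 0$ to count as join irreducible the conclusion is immediate since $\hat 0 \leq \theta$ anyway. Either way the displayed identity plus join irreducibility closes the argument.
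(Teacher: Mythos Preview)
Your argument is correct. Your primary route---writing $\b = \b \wedge (\theta \vee \delta) = (\b \wedge \theta) \vee (\b \wedge \delta)$ and then invoking join irreducibility---is genuinely different from the paper's proof, which instead passes through Birkhoff's representation: it views $\theta,\delta,\b$ as order ideals $I_\theta, I_\delta, I_\b$ of join irreducibles, reads the hypothesis as $I_\b \subseteq I_\theta \cup I_\delta$, and uses that $\b \in I_\b$ to place $\b$ in one of $I_\theta$ or $I_\delta$. This is exactly the alternative you sketch in your second paragraph. Your direct approach is more elementary (no representation theorem needed) and in fact works in any distributive lattice, finite or not; the paper's approach has the small advantage of fitting uniformly with the way Lemma~\ref{join_lemma} was already proved, so no new idea is introduced. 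Either way the content is the same one-line observation.
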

\begin{proof}
Since $\theta \v \beta \geq \d $ we have $I_\beta \subset I_\theta \cup I_\delta $. But we have $\beta \in I_\beta$, $\beta$ being a join irreducible. So we have $\beta \in I_\theta$ or $\beta \in I_\delta$.
\end{proof}

%\begin{defn} A lattice $\L$ is called a tree lattice if the comparability graph of the poset of its join-irreducible elements is a tree.
%\end{defn}

For a lattice $\L$ and an elements $\d \in \L$ let us define a sublattice $\Gamma_{\d}=\{\b \in \L : \b \sim \d\}$. Let us also define a set of functions $f,s,l$ on the lattice $\L$ taking values in $\mathbb{N}$ as follows:
$s(\d)= \card{ \{ \b : \b \leq \d \}}$ and $l(\d)=\card { \{\b : \b \geq \d \}}$ and $f(\d)=\card{\Gamma_{\d}}=s(\d)+l(\d)-1$.

%\begin{lem} $\Gamma_{\d}$ is an embedded sublattice of $\L$ for all $\d \in \L$
%\end{lem}
%\begin{proof}
%\end{proof}

Let us recall from \cite{noncomp} the following theorem, where the number of non-comparable pairs of elements in a distributive lattice $\mathcal{L}$ was denoted by $n(\L)$.
\begin{thm}\label{first}\label{equation}
For a distributive lattice $\L$ we have \[n(\mathcal{L})=1/2(\card{\mathcal{L}}^2 - \sum_{\d \in \L} f(\d))\]
\end{thm}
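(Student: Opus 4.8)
The plan is to prove this by a single double count of the ordered pairs in $\L\times\L$. I would partition $\L\times\L$ into the set $P_c$ of \emph{comparable} ordered pairs $(\d,\b)$ (those with $\b\leq\d$ or $\b\geq\d$) and the set $P_n$ of \emph{incomparable} ordered pairs. Since these are complementary subsets of $\L\times\L$, we get $\card{P_c}+\card{P_n}=\card{\L}^2$. The whole argument then reduces to evaluating $\card{P_c}$ and $\card{P_n}$ separately.

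For $P_n$: any incomparable pair consists of two distinct elements, and each unordered non-comparable pair $\{x,y\}$ gives rise to exactly the two ordered pairs $(x,y)$ and $(y,x)$ in $P_n$, with no collisions. Hence $\card{P_n}=2n(\L)$. For $P_c$: I would group the pairs by their first coordinate. For a fixed $\d\in\L$, the number of $\b$ with $(\d,\b)\in P_c$ is the number of elements comparable to $\d$, i.e. $\card{\Gamma_\d}$; and since $\Gamma_\d=\{\b:\b\leq\d\}\cup\{\b:\b\geq\d\}$ with the two sets meeting exactly in $\{\d\}$, inclusion–exclusion gives $\card{\Gamma_\d}=s(\d)+l(\d)-1=f(\d)$, which is just the definition recorded before the theorem. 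Summing over $\d$ yields $\card{P_c}=\sum_{\d\in\L}f(\d)$.

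Combining the three relations gives $\card{\L}^2=\sum_{\d\in\L}f(\d)+2n(\L)$, and solving for $n(\L)$ produces the asserted formula. I do not expect a real obstacle here; the only place to be careful is the treatment of the diagonal: each pair $(\d,\d)$ lies in $P_c$ and is counted once inside $f(\d)$, which is exactly why the right-hand side features $\card{\L}^2$ and $\sum f(\d)$ rather than their ``distinct pairs'' variants. It is also worth remarking that distributivity of $\L$ is never invoked, so the identity in fact holds for every finite poset.
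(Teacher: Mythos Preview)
Your proof is correct. The paper itself does not supply a proof of this theorem: it is quoted from \cite{noncomp} as background and is not re-proved in the text, so there is no in-paper argument to compare against. The double count of ordered pairs in $\L\times\L$ that you outline is the natural elementary derivation; your handling of the diagonal is exactly right, and your closing remark is accurate---distributivity is never used, so the identity holds for any finite poset.
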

In light of the above theorem we would like to calculate the sum $\sum_{\d \in \L} f(\d)$. Using the functions $s$ and $l$ we can rewrite the sum as $\sum_{\d \in \L} s(\d) + \sum_{\d \in \L} l(\d) -\card{\L}$. So to understand the sum $F=\sum_{\d \in \L} f(\d)$ we have to understand the sums $S=\sum_{\d \in \L} s(\d)$ and $L=\sum_{\d \in \L} l(\d)$. The following theorem is one step towards understanding the two sums $S$ and $L$.

Let us start with a recollection of a concept called pruning, which will be essential for the induction step, from the paper \cite{HM}. Let $\a \in \L$ be a maximal join-irreducible element. Let us define a sublattice $\L_\a$ of $\L$ which we called the pruned lattice of $\L$ with respect to the maximal join irreducible $\a$.

\begin{defn} \label{prune} The subset $\L_\a= \{ \b \in \L : \b \ngeq \a \}$ is called the pruned sublattice of the lattice $\L$ with respect to the maximal join irreducible $\a$ . So the subset $\L_\a$ consist of all the elements $\b$ which are either smaller than or non-comparable to $\a$.
\end{defn}
Let us also recall the following definition of embedded sublattices from \cite{HL}.
\begin{defn}\label{emb}A sublattice $\mathcal{D} \subset \L$ is called an embedded sublattice if for every $\theta, \delta \in \mathcal{D}$ whenever there are elements $\a , \b \in \L$ such that $\a \v \b =\theta$ and $\a \w \b =\delta$ then $\a , \b \in \mathcal{D}$.
\end{defn}

In the lemma below we will establish that ``pruned sublattice'' is truly a sublattice and is also embedded.
\begin{lem}\label{prune1} The pruned lattice $\L_\a$ is a sublattice of $\L$ that is embedded.
\end{lem}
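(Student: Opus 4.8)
The plan is to verify, in turn, that $\L_\a$ is closed under the ambient meet and join, and then that it satisfies the embedded condition of Definition \ref{emb}; the only step with any real content is closure under join. The observation that does most of the work is that $\L_\a$ is in fact an order ideal (down-set) of $\L$: if $\b \in \L_\a$ and $\g \leq \b$, then $\g \geq \a$ would force $\b \geq \g \geq \a$, contradicting $\b \ngeq \a$, so $\g \in \L_\a$. From this, closure under meet is immediate, since $\theta \w \delta \leq \theta$ for $\theta, \delta \in \L_\a$, hence $\theta \w \delta \in \L_\a$. The embedded condition is equally immediate: if $\theta, \delta \in \L_\a$ and $x \v y = \theta$, $x \w y = \delta$ for some $x, y \in \L$, then $x, y \leq \theta \in \L_\a$, so $x, y \in \L_\a$ by the down-set property. (In effect, any down-set that is closed under join is automatically an embedded sublattice.)

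The one genuine point is closure under join, and here I would invoke Lemma \ref{join_lemma_2}. Suppose $\theta, \delta \in \L_\a$ but $\theta \v \delta \notin \L_\a$, that is, $\theta \v \delta \geq \a$. Since $\a$ is join irreducible, Lemma \ref{join_lemma_2} applies and yields $\theta \geq \a$ or $\delta \geq \a$, contradicting $\theta, \delta \in \L_\a$. Hence $\theta \v \delta \in \L_\a$, and together with the meet-closure above this shows $\L_\a$ is a sublattice closed under the operations $\v, \w$ of $\L$.

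I do not anticipate a serious obstacle: the argument is short once Lemma \ref{join_lemma_2} is available, and that lemma is exactly what prevents the join of two elements of $\L_\a$ from ``jumping above'' $\a$. The only things to be careful about are, first, that ``sublattice'' is meant in the strong sense of closure under the ambient $\v$ and $\w$ (not merely that $\L_\a$ happens to form a lattice in the induced order), and second, that this lemma uses only the join-irreducibility of $\a$, not its maximality — maximality of $\a$ will instead be needed in the induction on $\card{J(\L)}$ used to prove Theorem \ref{main}.
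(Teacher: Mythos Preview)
Your proof is correct and follows essentially the same route as the paper: closure under join is handled via Lemma \ref{join_lemma_2}, while meet-closure and the embedded condition come from the fact that $\L_\a$ is a down-set (stated explicitly by you, used implicitly in the paper). The only cosmetic difference is that the paper cites Lemma \ref{join_lemma_2} again for the embedded part, whereas you observe---correctly---that the down-set property alone already forces $x,y\leq \theta\in\L_\a$ to lie in $\L_\a$.
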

\begin{proof} Pruned subset is a sublattice since if $x,y \in \L_\a$ then $x \w y \ngeq \a$ and if $x \v y \geq \a$ then by \ref{join_lemma_2} we have either $x \geq \a $ or $y \geq \a$ which leads to contradiction. For the embedded part if for $\theta, \delta $ we have $\theta \v \delta$ and $\theta \w \delta $ in $\L_\a$ then neither $x $ nor $y$ can be larger than $\a$ again by \ref{join_lemma_2}
\end{proof}

With the above notation in mind let us define a new set of functions, let $\a$ be a maximal join irreducible and let $\L_\a$ be the corresponding pruned sublattice. Then let us define $l_\a(\d)= \card{\set{x \in \L | x \geq \d} \cap \L_\a }$, $s_\a(\d)=\card{\set{x \in \L| x \leq \d} \cap \L_\a }$, $f_\a(\d)=\card{\Gamma_\d \cap \L_\a}$.

\begin{lem} $l(\d)=l_\a(\d)+l(\a \cap \d)$
\end{lem}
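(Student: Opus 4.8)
The plan is to prove the identity by partitioning the set $\{x \in \L : x \geq \d\}$ of elements above $\d$ according to whether or not a given element also lies above the maximal join irreducible $\a$. Any such $x$ either satisfies $x \ngeq \a$, in which case $x \in \L_\a$ by the definition of the pruned sublattice, or satisfies $x \geq \a$; these two alternatives are mutually exclusive and exhaustive. Hence $\{x \in \L : x \geq \d\}$ is the disjoint union of $\{x \in \L : x \geq \d\} \cap \L_\a$ and $\{x \in \L : x \geq \d \text{ and } x \geq \a\}$.

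For the first block, $\{x \in \L : x \geq \d\} \cap \L_\a$ has cardinality $l_\a(\d)$ directly from the definition of $l_\a$. For the second block I would observe that $x \geq \d$ and $x \geq \a$ hold simultaneously exactly when $x \geq \a \v \d$: one direction is the universal property of the join, and the other is immediate from $\a \v \d \geq \a$ and $\a \v \d \geq \d$. So the second block is the principal filter of $\a \v \d$, and thus has cardinality $l(\a \cap \d)$, reading $\a \cap \d$ as the join $\a \v \d$. Adding the cardinalities of the two blocks yields $l(\d) = l_\a(\d) + l(\a \cap \d)$.

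I do not expect a genuine obstacle in this lemma: the argument is a single disjoint-union decomposition of a finite set, using neither distributivity of $\L$ nor the maximality of $\a$ (those hypotheses enter only later, for instance in Lemma \ref{prune1} and when the pruning is iterated in the induction of Theorem \ref{main}). The only points worth a line of justification are the identification of $\{x \in \L : x \geq \d \text{ and } x \geq \a\}$ with the filter above $\a \v \d$ and the routine verification that the two blocks are disjoint and together exhaust $\{x \in \L : x \geq \d\}$.
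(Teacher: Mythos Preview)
Your proof is correct and follows essentially the same route as the paper: both arguments partition the filter $\{x \in \L : x \geq \d\}$ into the part lying in $\L_\a$ and the part lying above $\a$, then identify the latter with the filter above $\a \v \d$ via the universal property of the join. Your remark that neither distributivity nor the maximality of $\a$ is actually used here is a valid observation that the paper does not make explicit.
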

\begin{proof} From the definitions we have $l(\d)-l_\a(\d)=\card{ \{ \b : \b \geq \a \; \mbox{and } \; \b \geq \d\}}$. But $\b \geq \a$ and $\b \geq \d $ if and only if we have $\b \geq \a \v \d $. So putting all these together we get $l(\d) - l_\a(\d) + \card{\{\b : \b \geq \a \v \d \}}$. From the definition of the function $l$ we write $\card{\{\b : \b \geq \a \v \d \}}$ as $l(\a \v \d)$ and get the statement of the lemma.
\end{proof}

\begin{lem} $s(\d)=s_\a(\d)+\card{\{\b : \b \geq \a \; \mathrm{and } \; \b \leq \d \}}$. To simplify the notations let us denote $\card{\{\b : \b \geq \a \; \mathrm{and } \; \b \leq \d \}} $ by $h_\a(\d)$
\end{lem}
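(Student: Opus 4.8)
The plan is to argue exactly as in the proof of the previous lemma, by partitioning the down-set of $\d$ according to membership in the pruned sublattice. By Definition~\ref{prune} we have $\L_\a = \{\b \in \L : \b \ngeq \a\}$, so the set-theoretic complement of $\L_\a$ inside $\L$ is precisely $\{\b \in \L : \b \geq \a\}$. Intersecting both pieces with $\{\b \in \L : \b \leq \d\}$ yields a disjoint decomposition
\[
\{\b : \b \leq \d\} \;=\; \bigl(\{\b : \b \leq \d\} \cap \L_\a\bigr)\;\sqcup\;\{\b : \b \leq \d \text{ and } \b \geq \a\}.
\]
Taking cardinalities, the left side is $s(\d)$, the first term on the right is $s_\a(\d)$ by definition, and the second is $h_\a(\d)$ by definition; this is the claimed identity. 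Equivalently, one computes directly $s(\d) - s_\a(\d) = \card{\{\b : \b \geq \a \text{ and } \b \leq \d\}} = h_\a(\d)$.

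The only point worth flagging is the contrast with the $l$-lemma. There the condition ``$\b \geq \a$ and $\b \geq \d$'' could be collapsed to the single inequality $\b \geq \a \v \d$, which let the correction term be rewritten as $l(\a \v \d)$. Here the condition ``$\b \geq \a$ and $\b \leq \d$'' describes the interval $[\a,\d]$, which is not the principal up-set or down-set of a single element, so the correction term does not simplify and we simply record it as $h_\a(\d)$ (so $h_\a(\d) = \card{[\a,\d]}$ when $\a \leq \d$, and $h_\a(\d) = 0$ otherwise). Consequently there is no real obstacle in this lemma: its content is the purely combinatorial bookkeeping of splitting a down-set along the complement of $\L_\a$. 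The genuine work — understanding how $h_\a(\d)$ behaves as $\d$ ranges over $\L$, and matching it against the analogous $l(\a \v \d)$ terms so that the two sums $S$ and $L$ can be compared after pruning — is deferred to the subsequent steps of the induction on $\card{J(\L)}$.
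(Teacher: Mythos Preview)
Your proposal is correct and is essentially the same as the paper's approach: the paper simply says ``Proof of this fact is straightforward from the definition,'' and what you have written is exactly the natural unpacking of that remark, namely the disjoint decomposition of $\{\b : \b \leq \d\}$ along $\L_\a$ and its complement $\{\b : \b \geq \a\}$. Your additional paragraph contrasting with the $l$-lemma and noting that $h_\a(\d)$ does not collapse to a single principal filter or ideal is accurate commentary, though not part of the proof proper.
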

\begin{proof} Proof of this fact is straightforward from the definition.
\end{proof}

We want to show that the sum $I= \sum_{\d \in \L} l(\d) -s(\d)=0$. To show that we want to use an induction argument on the size of the lattice $\L$. Let $\a$ be a maximal join irreducible in the lattice $\L$ and $\L_\a$ be its pruned lattice, let us denote the complement of the pruned lattice by $X_\a = \L \setminus \L_\a$. Note that from the definition of the pruned lattice the structure of the set $X_\a$ can be written rather easily so we include that as a lemma.
\begin{lem} $X_\a=\{ \b \in \L : \b \geq \a \}$, is an embedded sub-lattice.
\end{lem}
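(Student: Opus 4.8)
The plan is to recognize $X_\a$ as the principal filter generated by $\a$ and then verify the sublattice and embedded conditions directly; unlike in Lemma \ref{prune1}, I expect no appeal to distributivity to be needed here.

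First I would unwind Definition \ref{prune}: since the pruned lattice is $\L_\a = \{\b \in \L : \b \ngeq \a\}$, its complement inside $\L$ is exactly $X_\a = \{\b \in \L : \b \geq \a\}$, the up-set of $\a$. This identification is immediate and is really just a restatement. Next I would check that $X_\a$ is a sublattice: if $x, y \in X_\a$ then $\a \leq x$ and $\a \leq y$, hence $\a \leq x \w y$ and a fortiori $\a \leq x \v y$, so both $x \w y$ and $x \v y$ lie in $X_\a$. Note this uses only that $\a$ is a lower bound of $\{x,y\}$, not any property of $\a$ as a maximal join irreducible.

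For the embedded property I would argue as follows: suppose $\te, \d \in X_\a$ and there exist $a, b \in \L$ with $a \v b = \te$ and $a \w b = \d$. Since $\d = a \w b \in X_\a$, we have $a \w b \geq \a$, and therefore $a \geq a \w b \geq \a$ and likewise $b \geq a \w b \geq \a$; hence $a, b \in X_\a$, as required. In fact only the hypothesis $\d \in X_\a$ is used, and $\te$ plays no role.

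The honest assessment is that there is no real obstacle in this lemma: every principal filter in any lattice is automatically an embedded sublattice, so the statement holds without using distributivity at all — in contrast with Lemma \ref{prune1}, where Lemma \ref{join_lemma_2}, and hence Birkhoff's representation, was essential to rule out $x \v y \geq \a$. The only point worth flagging is this asymmetry between the pruned lattice $\L_\a$ and its complement $X_\a$.
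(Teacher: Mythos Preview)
Your argument is correct and complete. The paper's own proof is merely the one line ``Clear from the definition \ref{prune}'', so your explicit verification of both the sublattice and embedded conditions is a faithful elaboration of what the paper leaves implicit; your additional remark that distributivity is not needed here (in contrast with Lemma \ref{prune1}) is a valid observation that the paper does not make.
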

\begin{proof} Clear from the definition \ref{prune}.
\end{proof}

With the above definitions in mind we will split the sum $I= \sum_{\d \in \L} (l(\d) -s(\d))$ into two sums such as $I= \sum_{\d \in \L_\a} (l(\d)-\s(\d))+\sum_{\d \in X_\a}(l(\d)-s(\d))$. For notational simplicity let us denote the first part of the sum namely $\sum_{\d \in \L_\a} (l(\d)-\s(\d))$ as $S_\a$ and the second part namely $\sum_{\d \in X_\a}(l(\d)-s(\d))$ as $T_\a$. Thus we have $I=S_\a+T_\a$, and observe that if we prove $I=0$ then the main theorem \ref{main} follows.

\begin{lem} \[S_\a=\sum_{\d \in \L_\a} l(\a \v \d)\]
\end{lem}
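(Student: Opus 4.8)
The plan is to feed the two decomposition lemmas above into the induction on $\card{J(\L)}$ that underlies the main theorem. Recall that by definition $S_\a=\sum_{\d\in\L_\a}\bigl(l(\d)-s(\d)\bigr)$, so it suffices to evaluate $l(\d)-s(\d)$ for $\d\in\L_\a$ and sum.

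First I would dispose of the $s$-term. For $\d\in\L_\a$ we have $\d\ngeq\a$, and any $\b$ with $\a\leq\b\leq\d$ would force $\a\leq\d$, a contradiction; hence $h_\a(\d)=0$. The lemma $s(\d)=s_\a(\d)+h_\a(\d)$ then gives $s(\d)=s_\a(\d)$ for every $\d\in\L_\a$. Next, substituting $l(\d)=l_\a(\d)+l(\a\v\d)$ from the corresponding lemma and summing over $\d\in\L_\a$ yields
\[
S_\a=\sum_{\d\in\L_\a}\bigl(l_\a(\d)-s_\a(\d)\bigr)+\sum_{\d\in\L_\a} l(\a\v\d).
\]
So the lemma reduces to the claim $\sum_{\d\in\L_\a}\bigl(l_\a(\d)-s_\a(\d)\bigr)=0$.

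This last claim is exactly where the induction hypothesis enters. The point is that for $\d\in\L_\a$ the number $l_\a(\d)=\card{\set{x\in\L:x\geq\d}\cap\L_\a}$ is precisely the ``larger'' function of $\d$ computed \emph{inside} the lattice $\L_\a$, and likewise $s_\a(\d)$ is the ``smaller'' function computed inside $\L_\a$; here one uses that $\L_\a$ is a genuine (embedded) sublattice by Lemma \ref{prune1}. Since $\a\in X_\a$ and hence $\a\notin\L_\a$, pruning at the maximal join irreducible $\a$ removes exactly that join irreducible (as recalled from \cite{HM}), so $\card{J(\L_\a)}<\card{J(\L)}$. The induction hypothesis (i.e.\ the main theorem \ref{main} applied to the strictly smaller distributive lattice $\L_\a$) then gives $\sum_{\d\in\L_\a} l_\a(\d)=\sum_{\d\in\L_\a} s_\a(\d)$, which is the claim, and the formula $S_\a=\sum_{\d\in\L_\a} l(\a\v\d)$ follows.

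The main obstacle I anticipate is the bookkeeping needed to justify that $l_\a,s_\a$ restricted to $\L_\a$ really are the intrinsic larger/smaller functions of the sublattice $\L_\a$ (so that the induction hypothesis is applicable verbatim), together with the fact that $\card{J(\L_\a)}$ is strictly smaller; everything else is a routine substitution of the two preceding lemmas.
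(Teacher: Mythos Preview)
Your proposal is correct and follows essentially the same route as the paper: substitute the two decomposition lemmas for $l(\d)$ and $s(\d)$, observe $h_\a(\d)=0$ for $\d\in\L_\a$, and invoke the induction hypothesis on the pruned sublattice $\L_\a$ to kill $\sum_{\d\in\L_\a}(l_\a(\d)-s_\a(\d))$. Your version is in fact slightly more explicit than the paper's in justifying why the induction hypothesis applies (identifying $l_\a,s_\a$ with the intrinsic functions of $\L_\a$ and noting $\card{J(\L_\a)}<\card{J(\L)}$).
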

\begin{proof} \begin{eqnarray}
S_\a &=& \sum_{\d \in \L_\a } (l(\d)-s(\d)) \nonumber \\
&=&\sum_{\d \in \L_\a} l_\a(\d) + l(\a \v \d) - s_\a(\d) - h_\a(\d) \nonumber \\
\end{eqnarray}
But since $\d \in \L_\a$ we do not have any $\b \in \L$ which is larger than $\a$ and smaller than $\d$, since that will imply $\d \geq \a$ contrary to our assumption. So the number $h_\a(\d)=\card{\{\b : \b \geq \a \; \mathrm{and } \; \b \leq \d \}}$ is zero. Thus :
\[S_\a= \sum_{\d \in \L_\a} l_\a(\d) + l(\a \v \d) - s_\a(\d)\]

But by induction the sum $\sum_{\d \in \L_\a}( l_\a(\d) - s_\a(\d))$ is zero. So we have $S_\a=\sum_{\d \in \L_\a} l (\a \v \d )$ as required.
\end{proof}

Let us now similarly investigate the number $T_\a$.
\begin{lem} $T_\a = -\sum_{\d \in X_\a} s_\a(\d)$
\end{lem}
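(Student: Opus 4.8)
The plan is to run the same kind of bookkeeping that produced the formula for $S_\a$, but now on the other piece $X_\a = \{\b \in \L : \b \geq \a\}$. For $\d \in X_\a$ we split $l(\d)$ and $s(\d)$ using the two decomposition lemmas: $l(\d) = l_\a(\d) + l(\a \v \d)$ and $s(\d) = s_\a(\d) + h_\a(\d)$, where $h_\a(\d) = \card{\{\b : \b \geq \a \text{ and } \b \leq \d\}}$. The key observation that makes the $X_\a$ case different from the $\L_\a$ case is that now $\d \geq \a$, so $\a \v \d = \d$; consequently $l(\a \v \d) = l(\d)$. Also, since $\d \geq \a$, any $\b$ with $\a \leq \b \leq \d$ is exactly an element of $\{\b : \b \geq \a\} \cap \{\b : \b \leq \d\}$, i.e. $h_\a(\d)$ counts precisely the elements between $\a$ and $\d$ inside $X_\a$; I will want to recognize $h_\a(\d)$ as a ``smaller''-type count within the embedded sublattice $X_\a$.

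First I would substitute to get
\[
T_\a = \sum_{\d \in X_\a}\bigl(l(\d) - s(\d)\bigr) = \sum_{\d \in X_\a}\bigl(l_\a(\d) + l(\d) - s_\a(\d) - h_\a(\d)\bigr),
\]
using $l(\a \v \d) = l(\d)$ on $X_\a$. Next I would argue that $l_\a(\d) = 0$ for $\d \in X_\a$: an element $\b \in \L_\a$ satisfies $\b \ngeq \a$, so it cannot satisfy $\b \geq \d \geq \a$; hence no element of $\L_\a$ lies above $\d$, and $l_\a(\d) = \card{\{x \geq \d\} \cap \L_\a} = 0$. After dropping that term,
\[
T_\a = \sum_{\d \in X_\a}\bigl(l(\d) - h_\a(\d)\bigr) - \sum_{\d \in X_\a} s_\a(\d).
\]
So it remains to show $\sum_{\d \in X_\a}\bigl(l(\d) - h_\a(\d)\bigr) = 0$. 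Here I would note that for $\d \in X_\a$ the set $\{\b : \b \geq \d\}$ is entirely contained in $X_\a$ (if $\b \geq \d \geq \a$ then $\b \geq \a$), so $l(\d)$ equals the number of elements of $X_\a$ above $\d$, i.e. the ``larger'' function of the lattice $X_\a$ evaluated at $\d$; likewise $h_\a(\d)$ is the number of elements of $X_\a$ below $\d$, i.e. the ``smaller'' function of $X_\a$ at $\d$. Therefore $\sum_{\d \in X_\a}\bigl(l(\d) - h_\a(\d)\bigr)$ is exactly the quantity $I$ for the distributive lattice $X_\a$, which has strictly fewer join irreducibles than $\L$ (the maximal join irreducible $\a$ of $\L$ is not join irreducible in $X_\a$, since $\a$ is the bottom of $X_\a$), so by the induction hypothesis it vanishes. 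This gives $T_\a = -\sum_{\d \in X_\a} s_\a(\d)$.

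The main obstacle I anticipate is the very last step: justifying cleanly that $X_\a$, with its induced order, is a distributive lattice to which the inductive hypothesis genuinely applies, and in particular that its count of join irreducibles is strictly smaller than that of $\L$. One has to check that $X_\a$ is closed under the meet and join of $\L$ (which follows from the preceding lemma that $X_\a$ is an embedded sublattice, using \ref{join_lemma_2} as in \ref{prune1}), that it is distributive (inherited), and that passing from $\L$ to $X_\a$ strictly decreases $\card{J(\L)}$ — the point being that $\a$ becomes the minimum of $X_\a$ and hence is no longer join irreducible there, while no new join irreducibles are created. If instead one prefers to avoid a second application of induction, an alternative is to verify $\sum_{\d \in X_\a}\bigl(l(\d) - h_\a(\d)\bigr) = 0$ directly by the symmetry $\card{\{(\d,\b) \in X_\a^2 : \b \geq \d\}} = \card{\{(\d,\b) \in X_\a^2 : \b \leq \d\}}$, which holds for any finite poset by swapping the roles of the two coordinates; this is the cleaner route and I would present it that way.
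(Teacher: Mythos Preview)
Your primary argument is correct and matches the paper's proof essentially step for step: substitute the two decomposition lemmas, use $\d \geq \a$ on $X_\a$ to obtain $l_\a(\d)=0$ and $\a \vee \d = \d$, regroup as $\sum_{\d \in X_\a}(l(\d)-h_\a(\d)) - \sum_{\d \in X_\a} s_\a(\d)$, recognize $l(\d)$ and $h_\a(\d)$ as the ``larger'' and ``smaller'' functions of the sublattice $X_\a$, and apply the induction hypothesis to that smaller lattice. The paper does exactly this, though it spends less care than you do on checking that the induction applies to $X_\a$.

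Your closing alternative is not in the paper and is worth flagging: showing $\sum_{\d \in X_\a}(l(\d)-h_\a(\d))=0$ by the bijection $(\d,\b)\mapsto(\b,\d)$ on comparable pairs is strictly more elementary, requires no inductive hypothesis, and works in any finite poset. In fact the same double-counting proves the main theorem itself directly for an arbitrary finite poset, bypassing distributivity, pruning, and the entire inductive apparatus; the paper does not take this shortcut.
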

\begin{proof} \begin{eqnarray} T_\a &=& \sum_{\d \in X_\a} (l(\d) -s(\d)) \nonumber \\
&=& \sum_{\d \in X_\a } (l_\a(\d)+l(\a \w \d) - s_\a(\d)-h_\a(\d)\nonumber \nonumber \\
\end{eqnarray}

Since $\d \geq \a $ the number $l_\a(\d)=0$ and $\a \w \d = \d$. So we rewrite the above equation to:
\begin{eqnarray}
& = &\sum_{\d \in X_\a} (l(d)-h_\a(\d)) - \sum_{\d \in X_\a} s_\a(\d)\nonumber \\
\end{eqnarray}
Looking at the sublattice $X_\a$ observe that the function $h_\a(\d)$ is just the ``smaller function'' for the lattice $X_\a$. Hence using the induction hypothesis we get \[\sum_{\d \in X_\a} (l(d)-h_\a(\d))=0\] Thus :
\[T_\a=-\sum_{\d \in X_\a} s_\a(\d)\]

\end{proof}

Let us summarise the above two lemmas in the following lemma:
\begin{lem} \[I=S_\a+T_\a=\sum_{\d \in \L_\a} l(\a \v \d) - \sum_{\d \in X_\a} s_\a(\d)\]
\end{lem}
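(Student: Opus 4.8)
The displayed claim is really two equalities, and the plan is to dispatch each in turn. The first, $I = S_\a + T_\a$, holds by definition: since every $\b \in \L$ either lies above $\a$ or does not, $\L$ is the disjoint union $\L_\a \sqcup X_\a$, and splitting the defining sum $I = \sum_{\d \in \L}(l(\d) - s(\d))$ along this partition is exactly what produced the pieces $S_\a$ and $T_\a$. So there is nothing to do there.

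For the second equality I would simply substitute the two immediately preceding lemmas, $S_\a = \sum_{\d \in \L_\a} l(\a \v \d)$ and $T_\a = -\sum_{\d \in X_\a} s_\a(\d)$, into $I = S_\a + T_\a$. In this sense the present lemma is bookkeeping: its content is assembling those two computations, each of which in turn applied the inductive hypothesis to an embedded sublattice with strictly fewer join irreducibles than $\L$ --- namely $\L_\a$ (embedded by Lemma \ref{prune1}) and $X_\a = \{\b \in \L : \b \geq \a\}$.

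The step that genuinely carries weight --- and which this lemma prepares rather than completes --- is showing that the right-hand side vanishes, i.e. $\sum_{\d \in \L_\a} l(\a \v \d) = \sum_{\d \in X_\a} s_\a(\d)$. I would prove this by double counting the pairs $(\d, \b)$ with $\d \in \L_\a$, $\b \in X_\a$, and $\d \leq \b$. On one hand, for fixed $\d \in \L_\a$ an element $\b \in \L$ satisfies $\b \geq \a \v \d$ exactly when $\b \geq \a$ and $\b \geq \d$, that is, when $\b \in X_\a$ and $\b \geq \d$; hence summing $l(\a \v \d)$ over $\d \in \L_\a$ counts precisely these pairs. On the other hand, organizing the same count by $\b$ first gives, for each $\b \in X_\a$, the number of $\d \in \L_\a$ with $\d \leq \b$, which is $s_\a(\b)$ by definition. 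The only point needing care is that $\a \v \d$ really lies in $X_\a$ --- immediate from $\a \v \d \geq \a$ --- so that the first count is not missing any pairs; granting this, the two sums agree, $I = 0$, and Theorem \ref{main} follows as observed after the split.
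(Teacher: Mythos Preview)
Your treatment of the lemma itself is correct and matches the paper exactly: the paper presents this lemma without proof, calling it a summary of the two preceding ones, and you have correctly identified that $I = S_\a + T_\a$ is just the partition $\L = \L_\a \sqcup X_\a$ and that the second equality is direct substitution of those two lemmas.

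Where you go beyond the lemma --- the double-counting argument showing the right-hand side vanishes --- you take a genuinely different and much shorter route than the paper. The paper establishes $\sum_{\d \in \L_\a} l(\a \v \d) = \sum_{\d \in X_\a} s_\a(\d)$ through three further lemmas: it introduces the fibers $C_\g = \{x \in \L_\a : x \v \a = \g\}$ and their maxima $x_\g$, proves $s_\a(\d) = s(x_\d)$ for $\d \in X_\a$, rewrites the first sum as $\sum_{\g \in X_\a} \card{C_\g}\, l(\g)$, and finally shows this equals $\sum_{\g \in X_\a} s(x_\g)$ by partitioning $\{\b : \b \leq x_\g\}$ into the sets $C_{\g_i}$. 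Your observation that both sides count the pairs $(\d,\b) \in \L_\a \times X_\a$ with $\d \leq \b$ replaces all of that in one line, and in fact uses nothing about distributivity or join irreducibles --- the same double count, applied directly to $\L$ rather than to the pieces $\L_\a$ and $X_\a$, already gives $\sum_\d s(\d) = \sum_\d l(\d)$ for any finite poset. What the paper's heavier machinery buys is an explicit description of the fibers of the map $\d \mapsto \a \v \d$ from $\L_\a$ onto $X_\a$, which may be of independent interest even though it is not needed for the theorem.
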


\begin{defn} For $\g \in X_\a$ let us denote $C_\g = \{ x \in \mathcal{L}_{\a} | x \v \a = \g \}$
\end{defn}

\begin{lem} For any $\g \in X_\a$, $C_\g$ is non-empty and is a sublattice of $\L$.
\end{lem}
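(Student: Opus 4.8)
The plan is to exhibit a canonical element of $C_\g$ using Birkhoff's representation theorem, and then to verify that $C_\g$ is closed under $\w$ and $\v$ inside $\L$. First I would pass to the ideal picture: write each $\d \in \L$ as the order ideal $I_\d$ in the poset $J$ of join-irreducibles, so that $\v$ becomes union, $\w$ becomes intersection, and the maximal join-irreducible $\a$ corresponds to a maximal element of $J$ with $I_\a = J \setminus \{\a\}$ together with... actually $I_\a$ is the principal ideal generated by $\a$; the key point is that $\a$ is maximal in $J$, so $\a$ itself is not below any other join-irreducible. Given $\g \in X_\a$, we have $\a \in I_\g$; set $x := \g \w (\text{the element whose ideal is } I_\g \setminus \{\a\})$, i.e. the element corresponding to $I_\g \setminus \{\a\}$. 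Because $\a$ is maximal in $J$, removing it from an ideal still leaves an ideal, so this $x$ is a genuine element of $\L$, and $x \ngeq \a$ since $\a \notin I_x$, hence $x \in \L_\a$. Moreover $I_x \cup I_\a = (I_\g \setminus \{\a\}) \cup I_\a$; since $I_\a$ is the principal ideal of $\a$ and $I_\g$ contains $\a$ hence contains $I_\a$, this union is exactly $I_\g$, so $x \v \a = \g$. This shows $C_\g \neq \emptyset$.

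For the sublattice claim, take $x, y \in C_\g$, so $x, y \in \L_\a$ and $x \v \a = y \v \a = \g$. Since $\L_\a$ is a sublattice of $\L$ by Lemma~\ref{prune1}, both $x \w y$ and $x \v y$ lie in $\L_\a$. Distributivity gives $(x \w y) \v \a = (x \v \a) \w (y \v \a) = \g \w \g = \g$, so $x \w y \in C_\g$; and $(x \v y) \v \a = (x \v \a) \v (y \v \a) = \g \v \g = \g$, so $x \v y \in C_\g$. Hence $C_\g$ is closed under both operations and is a sublattice of $\L$.

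The only subtle point is the non-emptiness, and there the crucial use is the maximality of $\a$ in $J$: it is exactly what guarantees that deleting $\a$ from the ideal $I_\g$ produces another ideal, equivalently that $\g$ covers or lies above some element of $\L_\a$ whose join with $\a$ recovers $\g$. Everything else is a routine application of distributivity, so I expect no real obstacle beyond making the ideal bookkeeping precise.
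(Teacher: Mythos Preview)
Your proof is correct. The sublattice part matches the paper's argument almost verbatim: both use distributivity to get $(x\w y)\v\a=(x\v\a)\w(y\v\a)=\g$, and the join case is the trivial identity $(x\v y)\v\a=(x\v\a)\v(y\v\a)=\g$; you additionally invoke Lemma~\ref{prune1} to place $x\v y$ and $x\w y$ inside $\L_\a$, which the paper leaves implicit.

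For non-emptiness, however, you take a genuinely different and cleaner route. The paper argues inside $\L$: it picks a maximal $a\le\g$ with $a\ngeq\a$, then runs a case analysis (is $a\v\a=\g$? a cover? strictly below?) using Lemma~\ref{join_lemma} and an induction on the rank of $\g$, eventually forcing $a\v\a=\g$. You bypass all of this by working in the Birkhoff picture: since $\a$ is maximal in $J$, deleting $\a$ from the ideal $I_\g$ still leaves an ideal, and the corresponding element $x$ visibly satisfies $x\in\L_\a$ and $x\v\a=\g$. Your construction even identifies $x$ as the maximum of $C_\g$ (what the paper later calls $x_\g$), so it yields more with less effort. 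The paper's approach has the minor advantage of staying internal to the lattice operations, but your use of the ideal model is exactly in the spirit of Lemmas~\ref{join_lemma} and~\ref{join_lemma_2}, so nothing foreign is introduced.

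One cosmetic point: the phrase ``$x:=\g\w(\text{the element whose ideal is }I_\g\setminus\{\a\})$'' is redundant, since that element is already $\le\g$; just define $x$ to be the element with ideal $I_\g\setminus\{\a\}$.
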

\begin{proof}
Let $a$ be the maximal element in $\L$ which is smaller than $\g$ but not larger than $\a$. If there exist no such element then every element $a$ smaller than $\g$ is also larger than $\a$, which will mean $\a$ is the smallest element in $\L$. But that is not the case since $\a$ is a maximal join irreducible. Consider $a \v \a$ , since both $a$ and $\a$ are smaller than $\g$, $a \v \a \leq \g$. If $\g = a \v \a $ then we are done, and if $\g$ covers $a \v \a $ then by \ref{join_lemma} we have a join irreducible $\epsilon$ such that $\g = \epsilon \v a \v \a$ or $\g = (\epsilon \v a) \v \a $ which shows $\epsilon \v a \in C_\g$. And if there is $\g_1$ such that $\g > \g_1 > a \v \a$, by induction on the rank of $\g$ we have an element $b$ such that $\g_1 = b \v \a$. But note that $\g > b$ and $b \ngeq \a$ and we have $\g > a \v b$ and $a \v b \ngeq \a$ which contradicts the maximality of $a$.

For the lattice part, if $x, y \in C_\g$ then $x \v y \v \a = x \v \g $ but since $x \in C_\g$ we have $x \leq \g$ so $x \v \g = \g$. Similarly for $(x \w y) \v \a= (x \v \a) \w (y \v \a)= \g \w \g= \g$.
\end{proof}

\begin{defn} Let $x_\g$ be the maximal element of $C_\g$.
\end{defn}

\begin{lem}\label{one} For each $\d \in X_\a$ we have $s_\a(\d)=s_\a(x_\d)=s(x_\d)$
\end{lem}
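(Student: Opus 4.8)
The plan is to unpack the three quantities $s_\a(\d)$, $s_\a(x_\d)$ and $s(x_\d)$ and show each coincides with $\card{\{y \in \L_\a : y \le x_\d\}}$. The second equality $s_\a(x_\d) = s(x_\d)$ should be the easy half: since $x_\d \in \L_\a$ by definition of $C_\d$, and $\L_\a$ is a down-set-like embedded sublattice — more precisely, since $\a$ is a maximal join irreducible, any $y \le x_\d$ with $y \ge \a$ would force $x_\d \ge \a$, contradicting $x_\d \in \L_\a$ — every element below $x_\d$ already lies in $\L_\a$. Hence $\{y : y \le x_\d\} = \{y : y \le x_\d\} \cap \L_\a$, giving $s(x_\d) = s_\a(x_\d)$.

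The substantive part is the first equality $s_\a(\d) = s_\a(x_\d)$, i.e. $\card{\{y \in \L_\a : y \le \d\}} = \card{\{y \in \L_\a : y \le x_\d\}}$. Since $x_\d \le \d$ (because $x_\d \in C_\d$ means $x_\d \v \a = \d$, so $x_\d \le \d$) the inclusion $\{y \in \L_\a : y \le x_\d\} \subseteq \{y \in \L_\a : y \le \d\}$ is immediate; I must show the reverse. So take $y \in \L_\a$ with $y \le \d$. Then $y \v \a \le \d$ and $y \v \a \ge \a$, so $y \v \a \in X_\a$ and $y \v \a \le \d$. The key claim is that $y \v \a$ lies in $C_\d$, equivalently $y \v \a = \d$ is \emph{not} needed — rather I want $y \le x_\d$. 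The right way: I will show $y \v \a \le x_\d \v \a = \d$ is automatic, but I actually need that $y$ itself is $\le x_\d$, and for that I invoke the maximality built into the definition of $x_\d$ together with the structure of $C_\d$ as a sublattice. Concretely: $y \v x_\d \in \L_\a$ (since $\L_\a$ is a sublattice and both lie in it), and $(y \v x_\d) \v \a = (y \v \a) \v (x_\d \v \a)$; since $y \v \a \le \d$ and $x_\d \v \a = \d$ this equals $\d$, so $y \v x_\d \in C_\d$. By maximality of $x_\d$ in $C_\d$ we get $y \v x_\d \le x_\d$, hence $y \le x_\d$. This shows $\{y \in \L_\a : y \le \d\} \subseteq \{y \in \L_\a : y \le x_\d\}$, completing the equality.

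I expect the main obstacle to be purely bookkeeping: making sure that each join I form genuinely stays inside $\L_\a$ (so that I may apply its sublattice property) and that the identity $(y \v x_\d)\v\a = \d$ is justified by distributivity and by $x_\d \v \a = \d$ together with $y\v\a \le \d$. Once $y \v x_\d \in C_\d$ is established, maximality of $x_\d$ does the rest with no further calculation. The only subtlety worth stating explicitly is that $x_\d \in \L_\a$ and that $\a$ being a \emph{maximal} join irreducible forces every lower set of an element of $\L_\a$ to remain in $\L_\a$, which is what collapses $s_\a$ to $s$ on such elements.
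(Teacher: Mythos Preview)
Your proposal is correct and follows essentially the same approach as the paper: both arguments establish the set equality $\{y\in\L_\a : y\le \d\}=\{y\in\L_\a : y\le x_\d\}$ by forming $y\vee x_\d$ and invoking the maximality of $x_\d$ in $C_\d$, and both deduce $s_\a(x_\d)=s(x_\d)$ from $x_\d\in\L_\a$ being a down-set. Your version is slightly more streamlined in that you directly verify $y\vee x_\d\in C_\d$ (using that $\L_\a$ is a sublattice and $(y\vee x_\d)\vee\a=y\vee\d=\d$), whereas the paper splits into the cases $y\vee x_\d=\d$ versus $y\vee x_\d\ne\d$ and derives a contradiction in each; the underlying idea is the same.
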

\begin{proof} Since $\d \in X_\a$ we have $\d = x_\d \v \a$ from the definition of $x_\d$. And we know $s_\a(\d)=\card{ \{ \b \in \L_\a : \b \leq \d \} }$. But note by the following argument that the set $\{ \b \in \L_\a : \b \leq \d\}$ is equal to the set $\{\b \in \L_\a : \b \leq x_\d \}$.

Clearly if $\b \leq x_\d$ then $\b \leq \d = x_\d \v \a$ for the inclusion other way around see that if $y \in \L_\a $ and $y \leq \d$ then if $y$ is non-comparable to $x_\d$ then we have $x_\d \leq y \v x_\d \leq \d$ which contradicts the maximality of $x_\d$ unless $y \v x_\d =\d$. But $\d \in X_\a$ which means $\d \geq \a$ or $y \v x_\d \geq \a $ so by \ref{join_lemma_2} we have either $y$ or $x_\d$ larger than $\a$ which is a contradiction to the assumption that both $y$ and $x_\d$ are in $\L_\a$. So it means $y$ is comparable to $x_\d$ but it cannot be larger since it would violate the maximality of $x_\d$ so we have $y \leq x_\d$, or the two sets are equal.

Which means $s_\a(\d)=\card{\set{ \b \in \L_\a : \b \leq \d}} = \card{\set{\b \in \L_\a : \b \leq x_\d}}=s_\a(x_\d)$. Now since $x_\d$ is by definition in $\L_\a$ we have $s_\a(x_\d)=s(x_\d)$.

\end{proof}

\begin{lem}\label{two} \[\sum_{\d \in \L_\a} l(\a \v \d)=\sum_{\g \in X_\a} \card{C_\g}l(\g)\]
\end{lem}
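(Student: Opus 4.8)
The plan is to reorganize the left-hand sum $\sum_{\d \in \L_\a} l(\a \v \d)$ by grouping together all elements $\d \in \L_\a$ that have the same join with $\a$. Concretely, the map $\d \mapsto \d \v \a$ sends $\L_\a$ into $X_\a$, since $\d \v \a \geq \a$; and by the lemma establishing that each $C_\g$ is non-empty, this map is in fact onto $X_\a$. Therefore $\L_\a$ is partitioned as $\L_\a = \bigsqcup_{\g \in X_\a} C_\g$, where $C_\g = \{ x \in \L_\a : x \v \a = \g \}$. This partition is the key structural input.

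Given the partition, I would split the sum along the fibers:
\begin{eqnarray}
\sum_{\d \in \L_\a} l(\a \v \d) &=& \sum_{\g \in X_\a} \; \sum_{\d \in C_\g} l(\a \v \d) \nonumber \\
&=& \sum_{\g \in X_\a} \; \sum_{\d \in C_\g} l(\g) \nonumber \\
&=& \sum_{\g \in X_\a} \card{C_\g}\, l(\g). \nonumber
\end{eqnarray}
The middle equality uses only the defining property of $C_\g$, namely that $\a \v \d = \g$ for every $\d \in C_\g$, so that the summand $l(\a \v \d) = l(\g)$ is constant on the fiber; the last step is just counting the $\card{C_\g}$ terms in that inner sum. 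This gives exactly the claimed identity.

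The only real content to verify carefully is that the fibers $C_\g$, as $\g$ ranges over $X_\a$, genuinely cover all of $\L_\a$ and are pairwise disjoint. Disjointness is immediate since $\d$ determines $\d \v \a$ uniquely, so $\d$ lies in exactly one $C_\g$. Surjectivity — that every $\g \in X_\a$ actually arises as $\d \v \a$ for some $\d \in \L_\a$ — is precisely the non-emptiness assertion of the earlier lemma on $C_\g$, which I may invoke. So the main (and essentially only) obstacle has already been handled upstream; here the argument is a clean double-counting once that partition is in hand.
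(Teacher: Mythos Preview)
Your proof is correct and follows essentially the same route as the paper: both arguments use the partition $\L_\a = \bigcup_{\g \in X_\a} C_\g$ induced by the map $\d \mapsto \a \v \d$, observe that $l(\a \v \d)=l(\g)$ is constant on each fiber $C_\g$, and collect terms. Your version is in fact more explicit about verifying disjointness and surjectivity (the latter via the non-emptiness lemma for $C_\g$), but the underlying idea is identical.
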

\begin{proof}If $\d \in C_\g$ then by definition we have $\g = \a \v \d$ or $l(\a \v \d)= l(\g)$. So $\sum_{\d \in \L_\a} l(\a \v \d)= \sum_{\g = \a \v \d \in X_\a} l(\g)$ we can do this change of variable since we have $\cup_{\g \in X_\a} C_\g = \L_\a$. But $\sum_{\g = \a \v \d \in X_\a} l(\g)=\sum_{\g \in X_\a} \card{C_\g}l(\g)$

\end{proof}

\begin{lem}\label{three} \[\sum_{\g \in X_\a} \card{C_\g} l(\g)=\sum_{\g \in X_\a}s(x_\g)\]
\end{lem}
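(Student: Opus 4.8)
The plan is to read both sides as the size of a set of ordered pairs and then exhibit an explicit bijection between these two sets, which is nothing but swapping the two coordinates.

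First I would rewrite the left-hand side. Since $C_\g \subseteq \L_\a$ for every $\g \in X_\a$, and every $x \in \L_\a$ lies in exactly one block, namely $C_{x \v \a}$ with $x \v \a \in X_\a$ (because $x \v \a \geq \a$), the family $\{C_\g\}_{\g \in X_\a}$ is a partition of $\L_\a$. Hence
\[\sum_{\g \in X_\a} \card{C_\g}\, l(\g) = \sum_{\g \in X_\a}\ \sum_{x \in C_\g} l(\g) = \sum_{x \in \L_\a} l(x \v \a).\]
Now $l(x \v \a) = \card{\{\b \in \L : \b \geq x \ \text{and}\ \b \geq \a\}}$, and the condition $\b \geq \a$ is exactly the condition $\b \in X_\a$. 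Therefore the left-hand side equals $\card{A}$, where $A = \{(x,\b) : x \in \L_\a,\ \b \in X_\a,\ x \leq \b\}$.

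Next I would rewrite the right-hand side. Fix $\g \in X_\a$. Since $x_\g \in \L_\a$, any $\b \leq x_\g$ automatically lies in $\L_\a$ (otherwise $\b \geq \a$ would force $x_\g \geq \a$). Hence $s(x_\g) = \card{\{\b \in \L_\a : \b \leq x_\g\}}$, so the right-hand side equals $\card{B}$, where $B = \{(\g,\b) : \g \in X_\a,\ \b \in \L_\a,\ \b \leq x_\g\}$.

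Finally I would check that the coordinate swap $(x,\b) \mapsto (\b,x)$ is a bijection $A \to B$ with inverse $(\g,\b) \mapsto (\b,\g)$. Given $(x,\b) \in A$, what I must verify is that $x \leq x_\b$; but this is exactly the set equality $\{y \in \L_\a : y \leq \b\} = \{y \in \L_\a : y \leq x_\b\}$ established inside the proof of Lemma \ref{one} (applied with $\d = \b \in X_\a$), so $(\b, x) \in B$. Conversely, for $(\g,\b) \in B$ we have $\b \leq x_\g \leq \g$, whence $(\b,\g) \in A$. The two maps are clearly mutually inverse, so $\card{A} = \card{B}$, which is the assertion of the lemma. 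The only genuinely non-formal ingredient is the implication ``$x \in \L_\a$, $\b \in X_\a$, $x \leq \b$ $\Rightarrow$ $x \leq x_\b$''; I do not expect this to be an obstacle, since it is already contained in Lemma \ref{one}, and everything else is bookkeeping about which elements fall in $\L_\a$ and which in $X_\a$.
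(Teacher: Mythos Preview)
Your argument is correct and is at bottom the same double count as the paper's, just packaged differently. The paper partitions the set $\{\b \leq x_\g\}$ into the blocks $C_{\g_i}$ for $\g_i \leq \g$ (via the equivalence $a \sim b \Leftrightarrow a \v \a = b \v \a$) and then swaps the order of the resulting double sum $\sum_{\g}\sum_{\g_i \leq \g}\card{C_{\g_i}}$; you instead realise both sides as cardinalities of sets of ordered pairs and give the explicit coordinate-swap bijection. The one nontrivial step in either version---that $x \in \L_\a$, $\b \in X_\a$, $x \leq \b$ forces $x \leq x_\b$---is identical and is precisely what Lemma~\ref{one} supplies. One small redundancy: your opening move, rewriting $\sum_{\g}\card{C_\g}\,l(\g)$ as $\sum_{x \in \L_\a} l(x \v \a)$, is exactly Lemma~\ref{two} read in reverse, so within the paper's sequence you are undoing the previous lemma before recovering it inside the bijection; harmless, but worth noticing.
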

\begin{proof} Let $A_\g$ be the set $\set{ \b \in \L : \b \leq x_\g}$. So $s(x_\g)=\card{A_\g}$. On the set $A_\g$ let us give a relation $\sim$ as follows: $a,b \in A_\g$ we will call $a \sim b $ if and only if $a \v \a = b \v \a$. Observe that this is an equivalence relation. Also observe that the equivalence classes are $C_{\g_i}=\set{a | a \v \a =\g_i}$ where $\g_i \leq \g$. Thus we have $s(x_\g)=\card{A_\g}= \sum_{\g_i \in X_\a, \g_i \leq \g} C_{\g_i}$, if we sum the both sides up we get \[\sum_{\g \in X_\a} s(x_\g)=\sum_{\g \in X_\a} \sum_{\g_i \leq \g, \g_i \in X_a } C_{g_i}\] \[= \sum_{\g \in X_\a} \card{C_\g} \sum_{y \geq \g} 1 \] Since each $C_{\g_i}$ is appearing $\sum_{y \geq \g_i} 1$ many times in the right hand sum. Now observe that $\sum_{y \geq \g} 1=l(\g)$ so we can finally write: \[ \sum_{\g \in X_\a} s(x_\g)=\sum_{\g \in X_\a} \card{C_\g} l(\g)\].
\end{proof}

\begin{thm}\label{main two} $I=S_\a+T_\a=0$
\end{thm}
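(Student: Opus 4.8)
The plan is to assemble the identities established in the preceding lemmas into a single telescoping chain that rewrites both contributions to $I$ as the \emph{same} sum indexed over $X_\a$, forcing their difference to vanish. No new combinatorics is needed at this stage; the work has been front-loaded into Lemmas \ref{one}, \ref{two}, \ref{three} and the two lemmas computing $S_\a$ and $T_\a$.

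First I would invoke the summary lemma to record
\[
I = S_\a + T_\a = \sum_{\d \in \L_\a} l(\a \v \d) - \sum_{\d \in X_\a} s_\a(\d).
\]
This step already absorbs the two uses of the inductive hypothesis: it is applied to $\L_\a$, which has one fewer join irreducible because the maximal join irreducible $\a$ has been pruned away, and to the embedded sublattice $X_\a = \set{\b \in \L : \b \geq \a}$, on which $h_\a$ coincides with the ``smaller'' function and which likewise has strictly fewer join irreducibles. Next I would apply Lemma \ref{two} to replace $\sum_{\d \in \L_\a} l(\a \v \d)$ by $\sum_{\g \in X_\a} \card{C_\g}\, l(\g)$, using that $\set{C_\g}_{\g \in X_\a}$ partitions $\L_\a$, and then Lemma \ref{three} to rewrite $\sum_{\g \in X_\a} \card{C_\g}\, l(\g) = \sum_{\g \in X_\a} s(x_\g)$. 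For the second sum I would apply Lemma \ref{one}, which gives $s_\a(\d) = s(x_\d)$ for every $\d \in X_\a$, hence $\sum_{\d \in X_\a} s_\a(\d) = \sum_{\d \in X_\a} s(x_\d)$. Putting these together,
\begin{align*}
I &= \sum_{\d \in \L_\a} l(\a \v \d) - \sum_{\d \in X_\a} s_\a(\d)
 = \sum_{\g \in X_\a} \card{C_\g}\, l(\g) - \sum_{\d \in X_\a} s(x_\d) \\
&= \sum_{\g \in X_\a} s(x_\g) - \sum_{\d \in X_\a} s(x_\d) = 0,
\end{align*}
since the two remaining sums are indexed over the same set $X_\a$ by the same function $\g \mapsto s(x_\g)$. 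As $I = \sum_{\d \in \L}\big(l(\d) - s(\d)\big)$ by construction, this is precisely Theorem \ref{main}.

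I expect the main obstacle to be bookkeeping rather than genuine difficulty: one must make sure the reindexing in Lemma \ref{two} (the disjoint decomposition $\L_\a = \bigcup_{\g \in X_\a} C_\g$, each $C_\g$ nonempty) matches the reindexing in Lemma \ref{three} (the fibering of $A_\g$ by the equivalence $a \sim b \iff a \v \a = b \v \a$), so that the residual $\sum_{\g \in X_\a} s(x_\g)$ produced by Lemma \ref{three} is exactly the $\sum_{\d \in X_\a} s(x_\d)$ produced by Lemma \ref{one}. I would also want to state the base case of the induction explicitly: when $\card{J(\L)} = 1$, $\L$ is the two-element chain, for which $S = L = 3$, so $I = 0$.
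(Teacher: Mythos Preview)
Your proposal is correct and follows essentially the same approach as the paper: invoke the summary lemma, then apply Lemmas~\ref{one}, \ref{two}, and \ref{three} to rewrite both $S_\a$ and $T_\a$ as the same sum $\sum_{\g \in X_\a} s(x_\g)$ over $X_\a$, so that their difference vanishes. The only cosmetic difference is the order in which you invoke Lemma~\ref{three} (you apply it to the first sum before subtracting, whereas the paper brings the two sums under a common index first and then applies Lemma~\ref{three}); your added remarks on where the inductive hypothesis is consumed and on the base case are helpful clarifications not spelled out in the paper's proof of this theorem.
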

\begin{proof} \[S_\a+T_\a=\sum_{\d \in \L_\a}l(\a \v \d) - \sum_{\a \in X_\a} s_\a(\d)\]
\[=\sum_{\g \in X_\a} \card{C_\g} l(\g) - \sum_{\d \in X_\a } s_\a(x_\d)\]
by \ref{one} and \ref{two}
\[=\sum_{\g \in X_\a} \card{C_\g}l(\g)-s(x_\g)=0 \]
by \ref{two} and after changing the running variable to a common variable. And the last equality follows from \ref{three}. So we have proved the main theorem \ref{main}.

\end{proof}

\bibliographystyle{abbrv}
\bibliography{main}

\end{document}